\newtheorem{theorem}{Theorem}[section]
\newtheorem{corollary}[theorem]{Corollary}
\newtheorem{lemma}[theorem]{Lemma}
\theoremstyle{definition}
\newtheorem{definition}[theorem]{Definition}
\theoremstyle{remark}
\numberwithin{equation}{section}
\newcommand{\R}{\mathbb{R}}
\newcommand{\q}{\ensuremath{\pi}}
\newcommand{\imap}{\ensuremath{G}}
\newcommand{\F}{\ensuremath{\mathcal{F}}}
\newcommand{\dank}{\textsf{Acknowledgments.\ }}
\begin{document}


\title{Polar foliations and isoparametric maps}

\author{Marcos M. Alexandrino}


\address{Marcos M. Alexandrino \hfill\break\indent 
Instituto de Matem\'{a}tica e Estat\'{\i}stica\\
Universidade de S\~{a}o Paulo, \hfill\break\indent
 Rua do Mat\~{a}o 1010,05508 090 S\~{a}o Paulo, Brazil}
\email{marcosmalex@yahoo.de, malex@ime.usp.br}

\thanks{The first author was  supported by CNPq and partially supported by FAPESP.  }

\subjclass[2000]{Primary 53C12, Secondary 57R30}

\keywords{Singular Riemannian foliations, polar actions, polar foliations, isoparametric maps, transnormal maps}

\begin{abstract}
A singular Riemannian foliation $\F$ on a complete Riemannian manifold $M$ is called  a \emph{polar foliation}    
if, for each regular point $p$, there is an immersed submanifold $\Sigma$, called \emph{section},  that passes through $p$ and that meets all the leaves and always perpendicularly. A typical example of a polar foliation is the partition  of $M$ into the orbits  of a \emph{polar action}, i.e., an isometric action with sections.
In this work we 
prove that the leaves of $\F$ coincide with the 
level sets of a smooth map $H: M\to \Sigma$ if $M$ is simply connected. In particular, we have that the orbits of a polar action on a simply connected space are level sets of an isoparametric map.
This result extends previous results due to the author and Gorodski, Heintze, Liu and Olmos,  
Carter and West, and Terng.

\end{abstract}


\maketitle

\section{Introduction}

In this section, we  recall some definitions and state our main results as  Theorem \ref{theorem-submersionwithsingularities} and Corollary \ref{corollary-polaraction-isoparametricmap}.

\emph{A singular Riemannian foliation} $\F$  on a complete Riemannian manifold $M$ is a singular foliation  such that every geodesic  perpendicular  to one leaf   is perpendicular to every leaf it meets; see Molino~\cite{Molino}.
A leaf $L$ of  $\F$ (and each point in $L$) is called \emph{regular} if the dimension of $L$ is maximal, otherwise $L$ is called {\it singular}.

Typical examples of singular Riemannian foliation are the partition of a Riemannian manifold into the orbits of an isometric action, into the leaf closures of a Riemannian foliation and examples constructed by suspension of homomorphisms; see  \cite{Molino,Alex2,Alex4}.

A singular Riemannian foliation is called  a \emph{polar foliation} (or a singular Riemannian foliations with sections)   
if, for each regular point $p$, there is an immersed submanifold $\Sigma_{p}$, called \emph{section},  that passes through $p$ and that meets all the leaves and always perpendicularly. It follows that $\Sigma_p$ is totally geodesic and that the dimension of $\Sigma_{p}$ is equal to the codimension of the regular leaf $L_{p}$. This is equivalent to saying that  the normal distribution of the regular leaves of  $\F$  is integrable; see \cite{Alex4}. 
 
 Typical example of a polar foliation is the partition  of a Riemannian manifold into the orbits  of a \emph{polar action}, i.e., an isometric action with sections. Note that there are  examples of polar foliations on Euclidean spaces (i.e., \emph{isoparametric foliations}) with inhomogeneous leaves; see 
 Ferus \emph{et al.}~\cite{FerusKarcherMunzner}.  Others examples can be constructed by suspension of homomorphism, suitable changes of metric and surgery;
see \cite{Alex2,Alex4} and \cite{AlexToeben}.

We are now able to state our main result.

\begin{theorem}
\label{theorem-submersionwithsingularities}
Let $\F$ be a polar  foliation on a simply connected complete Riemannian manifold~$M$ and  $B=M/\mathcal F$. Let $\rho:M\rightarrow B$ denotes the canonical projection.   Then  $B$ is a good Coxeter orbifold and we can find a section $\Sigma$,  a canonical embedding $j:B\to \Sigma$ and a  smooth homeomorphism  $\imap: B\to B$  such that the map $H=\imap\circ \rho :M\to \Sigma$ is smooth.
  \end{theorem}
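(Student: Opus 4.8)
First I would establish the structure of the leaf space $B = M/\F$. For a polar foliation, the restriction of $\F$ to a section $\Sigma$ induces a finite reflection group action: the Weyl group $W_\Sigma$ generated by the reflections of $\Sigma$ that fix the singular strata meeting $\Sigma$. The natural map $\Sigma/W_\Sigma \to B$ is a bijection, and this exhibits $B$ as an orbifold locally modeled on $\Sigma/W_\Sigma$. The key point, which uses simple connectedness of $M$, is that this orbifold is \emph{good} and in fact its orbifold fundamental group vanishes, so $W_\Sigma$ is a genuine Coxeter group acting on $\Sigma$ — here I would invoke the slice theorem for polar foliations together with the long exact homotopy sequence relating $\pi_1(M)$, $\pi_1$ of a regular leaf, and $\pi_1^{orb}(B)$ (this is where results of the author and Töben on the orbifold structure of leaf spaces of polar foliations enter). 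This gives the first assertion: $B$ is a good Coxeter orbifold.

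Next, the canonical embedding $j : B \to \Sigma$. Since $B \cong \Sigma/W_\Sigma$, I would pick a Weyl chamber (a fundamental domain) $C \subset \Sigma$; the quotient map $\Sigma \to \Sigma/W_\Sigma$ restricts to a homeomorphism $\overline{C} \to B$, and inverting it gives a topological embedding $j : B \to \Sigma$ with image $\overline{C}$. Composing, $j \circ \rho : M \to \Sigma$ is the map sending a point to "its position in the chamber", i.e., it is continuous and its level sets are exactly the leaves of $\F$. The only defect is that this map need not be smooth across the walls of $C$ (the singular strata), exactly as the map $x \mapsto |x|$ on $\R$ fails to be smooth at $0$ even though $\R/\math{Z}_2$ is a smooth manifold.

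The heart of the argument is therefore to produce the smooth homeomorphism $\imap : B \to B$ (equivalently, a self-homeomorphism of $\overline{C}$) that "corrects" this non-smoothness so that $H = \imap \circ \rho$ becomes smooth on all of $M$. The plan is local-to-global. Locally near a point of a codimension-one stratum, $\F$ looks like the product of a trivial foliation with the foliation of $\R$ by $\{\pm t\}$; here smoothing amounts to replacing the coordinate $t \mapsto |t|$ by $t \mapsto t^2$ (up to a diffeomorphism of the half-line), which is exactly the classical fact that the ring of even smooth functions on $\R$ is generated by $t^2$ (Whitney / Schwarz). More generally, near a stratum of higher codimension one must invoke G. Schwarz's theorem: smooth $W_\Sigma$-invariant functions on $\Sigma$ are smooth functions of a fixed finite set of generating invariant polynomials, so the basic invariants define a smooth map $\Sigma \to \R^k$ descending to a smooth embedding of $B = \Sigma/W_\Sigma$ into $\R^k$ whose image is a semialgebraic chamber; then $\imap$ is the composition of $j^{-1}$ with this polynomial chamber map, reparametrized back into $\Sigma$ via a tubular-neighborhood/collar argument so that it remains a homeomorphism of $B$ onto $B \subset \Sigma$. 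One checks that $H = \imap \circ \rho$ is smooth by working chart by chart: on each stratum of $M$ it is smooth by submersion theory for (non-singular) Riemannian foliations, and the invariant-theory normal form guarantees the pieces glue smoothly across strata. The fact that $H$ is globally well defined (single-valued) on all of $M$, not just on a neighborhood of $\Sigma$, again uses simple connectedness — without it one only gets a map to $\Sigma/W_\Sigma$ or a multivalued map, which is precisely the phenomenon the hypothesis rules out.

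I expect the main obstacle to be this last smoothing-and-gluing step: proving that the locally defined corrections patch into a single global smooth map $H$ on $M$ (and that $\imap$ is genuinely a homeomorphism of $B$, not just a continuous surjection), which requires carefully controlling the behavior of $H$ along the singular strata of arbitrary codimension and invoking the full strength of Schwarz's theorem on generators of invariant smooth functions together with the slice/linearization theorem for polar foliations. Establishing that $M$ simply connected forces the holonomy needed to kill the multivaluedness — i.e. that $\rho$ itself "unwinds" to a map into the single chamber $\overline{C}$ rather than into $\Sigma/W_\Sigma$ — is the conceptual crux that everything else rests on.
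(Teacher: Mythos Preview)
Your overall architecture---establish that $B$ is a good Coxeter orbifold via the lifted Weyl group on the universal cover of a section, identify $B$ with a chamber $\overline{C}\subset\Sigma$, then smooth the folding map $j\circ\rho$ by postcomposing with a self-homeomorphism of $\overline{C}$---matches the paper. The divergence is entirely in the smoothing step, and there it is substantive.

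You propose to build $G$ out of Schwarz's theorem on smooth invariants: near each stratum the isotropy is a finite linear reflection group, basic invariants $\sigma_1,\dots,\sigma_k$ exist, and these give a local smooth model for $H$. But Schwarz's theorem is a statement about a finite (or compact) group acting linearly on $\mathbb R^n$; in the present setting the section $\Sigma$ need not be flat and the reflection group $\widetilde{\Gamma}$ acting on $\widetilde{\Sigma}$ need not be finite, so there are no global invariant polynomials to write down. Your plan then reduces to patching local Schwarz charts into a single global homeomorphism $G:\overline{C}\to\overline{C}$, and the sentence ``reparametrized back into $\Sigma$ via a tubular-neighborhood/collar argument'' is exactly where a real argument is needed and none is supplied. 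Different Schwarz charts produce maps into different copies of $\mathbb R^k$, related by diffeomorphisms that are not the identity on the strata; gluing them while keeping $G$ a homeomorphism that is smooth and a stratum-wise diffeomorphism is not automatic. In fact the flat-section case you are effectively describing is precisely the earlier Alexandrino--Gorodski result that this paper sets out to generalize.

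The paper avoids invariant theory altogether. It first replaces the metric on $\widetilde{\Sigma}$ by a $\widetilde{\Gamma}$-invariant one for which the normal slices to every stratum are flat and totally geodesic (an inductive partition-of-unity construction). Then it writes $G=F_0\circ F_1\circ\cdots\circ F_{n-1}$, where each $F_i$ is an explicit radial ``flattening'' along the normal slices of the $i$-th stratum, built from a single auxiliary function $h:[0,\infty)\to[0,\infty)$ with $h^{(n)}(0)=0$ for all $n$, $h(t)=t$ for $t\ge 1$, and $h'\ne 0$ on $(0,\infty)$. The point is that these $F_i$ are defined globally from distance-to-stratum data, so no patching is needed, and the vanishing of all derivatives of $h$ at $0$ together with a chain-rule estimate $\|D^nF_i\|\le A_i/|l_i|^n$ forces both $G$ and $G\circ\pi$ to be smooth across each stratum. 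Finally, the passage from $\widetilde{\Sigma}$ back to $M$ (i.e.\ from a $\widetilde{\Gamma}$-invariant smooth map on the section to a smooth $\F$-basic map on $M$) is handled by the basic-forms isomorphism (Theorem~\ref{teo-basic-forms}), a step your outline does not mention.

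So: your plan is correct in spirit for flat sections but has a genuine gap in the curved case, and the paper's explicit flattening construction is designed precisely to bypass the globalization problem you flag as ``the main obstacle.''
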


The above result generalizes a result due to the author and Gorodski \cite{AlexGorodski} for the case of polar foliations with flat sections
and previous results of Heintze \emph{et al}.~\cite{HOL}, Carter and West~\cite{CarterWest2},
and Terng~\cite{terng}
for isoparametric submanifolds. 
It can also be viewed as a converse to the main result 
in~\cite{Alex1} and in Wang~\cite{Wang1}.

For the particular case of polar actions, the above theorem can be reformulate as follows.

\begin{corollary}
\label{corollary-polaraction-isoparametricmap}
The orbits of a polar action on a simply connected complete Riemannian manifold $M$ are level sets of a isoparametric map $H:M\to \Sigma,$ where $\Sigma$ is a section.
\end{corollary}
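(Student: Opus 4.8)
The plan is to derive Corollary \ref{corollary-polaraction-isoparametricmap} directly from Theorem \ref{theorem-submersionwithsingularities} by specializing to the foliation whose leaves are the orbits of the given polar action, and then checking that the map $H=\imap\circ\rho$ produced by the theorem is in fact an isoparametric map in the classical sense. So the first step is to observe that if a compact (or proper) group $K$ acts isometrically and polarly on $M$, then the partition $\F$ of $M$ into $K$-orbits is a singular Riemannian foliation — every geodesic issuing perpendicularly from an orbit stays perpendicular to all orbits it meets, because $K$ acts by isometries permuting the orbits — and it is polar precisely because the action is assumed to have sections. Hence Theorem \ref{theorem-submersionwithsingularities} applies verbatim: $B=M/K$ is a good Coxeter orbifold, there is a section $\Sigma$, a canonical embedding $j\colon B\to\Sigma$ and a smooth homeomorphism $\imap\colon B\to B$ so that $H=\imap\circ\rho\colon M\to\Sigma$ is smooth, and its level sets are exactly the fibers of $\rho$, i.e.\ the $K$-orbits.

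The second step is to recall what "isoparametric map" should mean in this Riemannian-manifold setting and to verify that $H$ satisfies it. The natural definition (following Terng, Heintze--Liu--Olmos, and the author--Gorodski) is that a smooth map $H\colon M\to\Sigma$ onto a totally geodesic submanifold is \emph{isoparametric} if it has no critical values in the interior of the image, if $\langle \grad H_i,\grad H_j\rangle$ and $\Delta H_i$ are functions of $H$ alone (i.e.\ constant along the fibers), and if the fibers over regular values are equifocal / the regular level sets form a parallel family. The key structural facts needed are already packaged in the theorem: the fibers of $H$ are the leaves of the polar foliation $\F$, $\Sigma$ is totally geodesic, and $j\colon B\hookrightarrow\Sigma$ identifies the leaf space with a Coxeter chamber in $\Sigma$. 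Because $\rho$ is a metric submersion onto the orbifold $B$ away from the singular strata, and $\imap$ is the smooth reparametrization constructed in the proof of the theorem, the composite $H$ automatically has the property that any $K$-invariant smooth function on $M$ factors through $H$; in particular the inner products of gradients and the Laplacians of the component functions of $H$ (which are $K$-invariant since $H$ is constant on orbits and $\Sigma$ may be coordinatized $K_\Sigma$-equivariantly, where $K_\Sigma$ is the generalized Weyl group) descend to $B$, which is exactly the statement that they are functions of $H$.

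The third step is to handle the regular/singular dichotomy and the equifocality. On the regular stratum $M_{\mathrm{reg}}$ the map $\rho$ (hence $H$, after the smooth homeomorphism $\imap$) is a submersion, so the regular orbits are the regular level sets of $H$ and they sweep out $M_{\mathrm{reg}}$ as a parallel family of submanifolds — this is just the statement that $\F$ restricted to $M_{\mathrm{reg}}$ is a (regular) polar, hence equifocal, Riemannian foliation, which is standard for polar foliations and in particular for orbits of polar actions. The singular orbits then correspond to the boundary and lower-dimensional faces of the Coxeter chamber $j(B)\subset\Sigma$, consistent with an isoparametric map. Putting these together gives that the orbits of the polar action are precisely the level sets of the isoparametric map $H\colon M\to\Sigma$.

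I expect the only real subtlety — everything else being a direct quotation of Theorem \ref{theorem-submersionwithsingularities} — to be a bookkeeping matter: making sure the chosen notion of "isoparametric map" is the one that the theorem actually delivers, i.e.\ that smoothness of $H$ together with the orbifold-submersion structure of $\rho$ genuinely forces $\langle\grad H_i,\grad H_j\rangle$ and $\Delta H_i$ to be functions of $H$. This should follow because these are $K$-invariant smooth functions on $M$ and any such function is a smooth function of $H$ — a fact that is itself essentially the content of the theorem (smoothness of the identification of $M/\F$ with the Coxeter chamber), so the corollary is genuinely a corollary and the proof is short.
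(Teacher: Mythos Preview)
Your overall strategy---specialize Theorem~\ref{theorem-submersionwithsingularities} to the orbit foliation of a polar action and then check that $H$ is isoparametric---is exactly right, and the paper treats the corollary as an immediate reformulation with no separate proof. However, you are working with a different (and heavier) definition of ``isoparametric map'' than the one the paper actually adopts. Immediately after the corollary the paper defines $H$ to be \emph{transnormal} if over every regular value it is locally an integrable Riemannian submersion (for \emph{some} metric on the target), and \emph{isoparametric} if in addition every regular level set has constant mean curvature. With that definition the verification is a two-line affair: on the regular part $\rho$ is a Riemannian submersion with integrable horizontal distribution (that is what ``polar'' means), and $\imap$ restricted to the regular stratum is a diffeomorphism, so after pulling back the metric $H=\imap\circ\rho$ is an integrable Riemannian submersion there; the regular level sets are orbits of an isometric action, hence homogeneous, hence of constant mean curvature.

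Your route via the conditions on $\langle\grad H_i,\grad H_j\rangle$ and $\Delta H_i$ is not wrong, but it introduces an unnecessary subtlety. In the last paragraph you assert that every $K$-invariant smooth function on $M$ is a \emph{smooth} function of $H$; this is not justified and is in fact doubtful, because the map $\imap$ built in the proof of Theorem~\ref{theorem-submersionwithsingularities} is a smooth homeomorphism whose inverse is not smooth (all normal derivatives of $\imap$ vanish along the walls by construction). So a $K$-invariant function $f$ descends to $\bar f$ on $B$, and $f=\bar f\circ\imap^{-1}\circ H$, but $\bar f\circ\imap^{-1}$ need not be smooth. Fortunately your own stated definition only asks that these quantities be \emph{constant on fibers}, which is automatic from $K$-invariance, so the questionable claim is not actually needed---but you should drop it, and it would be cleaner still to use the paper's definition and the short argument above.
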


Recall that a map $H:M\to \Sigma$ is called a \emph{transnormal map} if 
for each regular value $c\in \Sigma$ there exists a neighborhood  $V$ of $c$ in $\Sigma$  such that  for $U= H^{-1}(V)$ we have that $H \mid_{U}\to V$ is an integrable Riemannian submersion (with respect to some metric on  $V$). In addition, a transnormal map $H$ is said to be an \emph{isoparametric map}
if each regular level set has constant mean curvature. For  surveys on isoparametric maps, isoparametric submanifolds and their generalizations see Thorbergsson \cite{ThSurvey1,ThSurvey2,ThSurvey3}.

This paper is a \emph{preliminary version} and is organized as follows. In Section 2 we recall some facts about polar foliations and reflection groups. In Section 3 we prove Theorem \ref{proposition-reflectiongroups} that, together with the facts of Section 2, will imply Theorem \ref{theorem-submersionwithsingularities}.

\dank 
The author is very grateful to Dr. Alexander Lytchak  for very helpful discussions and in particular for suggesting  the question that motivates this work and for inspiring some ideas of the proof.

\section{Facts about polar foliations and reflection groups}

In this section we recall same facts about polar foliations and reflection groups that will be used in this work. The proofs of these facts can be found in \cite{Alex2,AlexToeben,AlexGorodski,Alex7} and in Davis~\cite{Davis}.

Several properties of polar actions are still true in the general context of polar foliations. One of them is the so called \emph{equifocality}(see \cite{Alex2,AlexToeben2}). 
This property implies that if  $\xi$ 
is a parallel normal  field  
along a curve $\beta:[0,1]\rightarrow L$ in a regular leaf  $L$,  
then the curve $t\mapsto \exp_{\beta(t)}(\xi(t))$ is  contained in the leaf 
$L_{\exp_{\beta(0)}(\xi)}$.

Now we  define a \emph{holonomy map} by setting  $\varphi_{[\beta]}: S_{\beta(0)}\rightarrow S_{\beta(1)}$ as 
$$\varphi_{[\beta]}(\exp(\xi(0)))=\exp(\xi(1)),$$ 
where $S_{\beta(i)}:=\{\exp_{\beta(i)}(\xi) | \xi\in \nu_{\beta(i)} L, \|\xi\|<\epsilon\}$ and  $[\beta]$ denotes the homotopy class of the curve $\beta$.
 The radius $\epsilon$ depends only on normal neighborhoods of the sections $\Sigma_{\beta(0)}$ and $\Sigma_{\beta(1)}$. Therefore the domain of the isometry 
$\varphi_{[\beta]}$  can contain singular points. Due to equifocality we have that $\varphi_{[\beta]}(x)\in L_{x}$. Hence, 
for a small $\epsilon,$ the map $\varphi_{[\beta]}$ coincides with the usual holonomy map.

We  define the  Weyl pseudogroup  as the pseudogroup generated by the local isometries $\varphi_{[\beta]}$ such that $\beta(0),\beta(1)\in \Sigma$. By the appropriate choice of section, this pseudogroup turns out to be a group that is called  \emph{Weyl group} $W(\Sigma)$; see \cite{Toeben}.
From now on we fix this section where the Weyl group is well defined as the section $\Sigma$.   

By definition, if $w\in W(\Sigma)$ then $w(x)\in L_{x}$ and $W(\Sigma)$ describes how the leaves of $\F$ intersect the section $\Sigma$. As expected, when $\F$ is  the partition of a Riemannian manifold $M$ into the orbits of a polar action $G\times M\to M$, the Weyl group $W(\Sigma)$ is the usual Weyl group $N/Z$, where $N=\{g\in G | \, g(x)\in \Sigma, \forall x\in \Sigma \}$ and $Z=\{g\in G | \, g(x)=x, \forall x\in \Sigma \}$; see \cite{PTlivro}.

Another result on polar actions that can be generalized to polar foliations is the fact that
the slice representations are polar. The result states that the infinitesimal foliation of $\F$ in the normal space of the leaves are polar, i.e.,  isoparametric, see \cite{Alex2}. This imply that, for each point $x,$ we can find a neighborhood $U$ such that the restriction of $\F$ to $U$ is diffeomorphic to an isoparametric foliation. 

In addition we have that the intersection of the singular leaves of $\F$ with the section $\Sigma$ is a union of totally geodesics hypersurfaces, called \emph{walls} and that the reflections in the walls are elements of the Weyl group $W(\Sigma)$; see \cite{Alex2}.
We  define  $\Gamma(\Sigma)$  as the group generated by these reflections.

The  fixed point set of a reflection $r\in\Gamma(\Sigma)$ can have more than one connected component if $\Sigma$ is not simply connected. In particular $\Gamma(\Sigma)$ is not a reflection group in the classical sense, as defined e.g., in Davis~\cite{Davis}. On the other hand,  recall that the fixed set points of a reflection is always a connected hypersurface if the space is simply connected.
This  motivates us to consider new groups acting on the Riemannian universal cover $\widetilde{\Sigma}$.

We start by considering the  section $\Sigma$. Let $w\in W(\Sigma)$. In what follows we will define a isometry $\tilde{w}:\widetilde{\Sigma}\to\widetilde{\Sigma}$ on the Riemannian universal cover of $\Sigma$ such that $\pi_{\Sigma}\circ\tilde{w}=w\circ\pi_{\Sigma}$ where $\pi_{\Sigma}:\widetilde{\Sigma}\to \Sigma$ is the Riemannian covering map.

\begin{definition}
Let $p$ be  regular point of the section $\Sigma$  and $\tilde{p}$ a point of $\widetilde{\Sigma}$ such that $\pi_{\Sigma}(\tilde{p})=p$.
Consider  a curve $c$ in $\Sigma$ that joins $p$ to $w(p)$ and $\delta$ be a curve in $\Sigma$ with $\delta(0)=p$. Let  $c *w\circ\delta$ be the concatenation  of $c$ and the curve $w\circ\delta$, $\widetilde{\delta}$ the lift of  $\delta$ starting at $\tilde{p}$ and $\widetilde{( c * w \circ \delta)}$ the lift of $c *w\circ\delta$ starting at $\tilde{p}$ .
Then we  set
\begin{equation}
\label{equation-new-def-tildew}
\tilde{w}_{[c]}(\widetilde{\delta}(1)):=\widetilde{( c * w \circ \delta)}(1).
\end{equation}
The isometry $\tilde{w}_{[c]}$ defined in equation \eqref{equation-new-def-tildew} is  called \emph{a lift of $w$ along $c$}. Clearly it depends on the homotopy class $[c]$.
\end{definition}

\begin{definition}
\label{def-lifted-Weyl-group}
We  call \emph{lifted Weyl group} $\widetilde{W}$ the group of isometies on the universal covering $\widetilde{\Sigma}$ generated by all isometries $\tilde{w}_{[c]}$ constructed above. In other words,
$\widetilde{W}=<\tilde{\omega}_{[c]}> $ for all $ w\in W$ and for all curves $ c:[0,1]\to \Sigma,$ such that $c(0)=p$ and $c(1)=w(p).$
\end{definition}

We are now able to construct the appropriate reflection group on $\widetilde{\Sigma}$.

\begin{definition}
Let $H$ be a wall in $\Sigma$ and $\widetilde{H}\subset \widetilde{\Sigma}$ an hypersurface such that $\pi_{\Sigma}(\widetilde{H})=H$.
Then a reflection $\tilde{r}$ in $\widetilde{H}$ is an element of $\widetilde{W}$. 
 The group generated by all reflections $\tilde{r}$ is called
\emph{lifted reflection group} $\widetilde{\Gamma}$.
\end{definition}

It follows from Davis  \cite[Lemma 1.1,Theorem 4.1]{Davis} that $\widetilde{\Gamma}$ acting on $\widetilde{\Sigma}$ is a reflection group in the classical sense, concept that we now recall.
Let $N$ be a simply connected Riemannian manifold. A \emph{reflection} $r:N \to N$ is an isometric involution such that the fixed point set $N_{r}$ separates $N$. It is possible to prove that $N_{r}$ is a connected totally geodesic hypersurface that separate $N$ in two components.
A  discrete subgroup $\Gamma$ of isometries of $N$ is called \emph{reflection group (in the classical sense) on $N$} if it is generated by reflections. 
Let $R$ be the set of reflections of $\Gamma$ and  $R(x)$ be the set of all $r\in R$ such that $x\in N_{r}.$  
A point $x\in N$ is \emph{nonsingular} if $R(x)=\emptyset$. Otherwise it is \emph{singular}. A \emph{chamber} of $\Gamma$ on $N$ is the closure of a connected component of the set of nonsingular points.
Let $C$ be a chamber. Denote by $V$ the set of reflections $v$ such that $R(x)=\{v\}$ for some $x\in C$. If $v\in V$ then $C_{v}=N_{v}\cap C$ is a \emph{panel} of $C$. Also $N_{v}$ is the \emph{wall supported by} $C_{v}$ and $V$ is the \emph{set of reflections through the panels of} $C$.
It turns out that $C$ is a fundamental domain of the action of $\Gamma$ on $N$,  the set $V$ generates $\Gamma$ and $(\widetilde{\Gamma},V)$ is a Coxeter system.

It follows from \cite{Alex7} that there exists a surjective homomorphism $\pi_{1}(M)\to \widetilde{W}/\widetilde{\Gamma}$. Therefore, if $M$ is simply connected, $\widetilde{W}=\widetilde{\Gamma}$ and hence
$M/\F=\Sigma/W(\Sigma)=\widetilde{\Sigma}/\widetilde{W}=\widetilde{\Sigma}/\widetilde{\Gamma}$.
As explained in \cite{Alex7}, the existence of such surjective homomorphism and the fact that $\widetilde{\Gamma}$ is a reflection group in the classical sense also allow us to give alternative proofs of the next two results, where the first is due to Lytchak \cite{Lytchak} and the second is due to the author and T\"{o}ben \cite{AlexToeben2}:
\begin{enumerate}
\item[(1)] If $M$ is simply connected, then the leaves are closed embedded.
\item[(2)] If $M$ is simply connected, the regular leaves have trivial holonomy and $M/\F$ is a simply connected Coxeter orbifold. 
\end{enumerate}

Let us now sum up the above discussion.

\begin{theorem}
\label{theorem-resumo-propriedades}
Let $\F$ be a polar foliation on a simply connected complete Riemannian manifold $M$. Then the leaves of $\F$ are closed embedded and the regular leaves have trivial holonomy. In addition $M/\F$ is a simply connected good Coxeter orbifold $N/\Gamma$ where $N$ is simply connected Riemannian manifold and $\Gamma$ is a reflection group. 
\end{theorem}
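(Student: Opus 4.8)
My plan is to assemble the structural facts recalled in this section; the single nontrivial input is the surjective homomorphism $\pi_{1}(M)\to\widetilde{W}/\widetilde{\Gamma}$ coming from \cite{Alex7}, and the rest is bookkeeping.

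First I will use that homomorphism together with the hypothesis that $M$ is simply connected: since $\pi_{1}(M)$ is then trivial, so is $\widetilde{W}/\widetilde{\Gamma}$, whence $\widetilde{W}=\widetilde{\Gamma}$. Feeding this into the chain of identifications
\[
M/\F=\Sigma/W(\Sigma)=\widetilde{\Sigma}/\widetilde{W}=\widetilde{\Sigma}/\widetilde{\Gamma}
\]
gives $M/\F=\widetilde{\Sigma}/\widetilde{\Gamma}$. Here the first equality is the fact that $W(\Sigma)$ describes exactly how the leaves of $\F$ meet the distinguished section $\Sigma$; the second holds because $\pi_{\Sigma}:\widetilde{\Sigma}\to\Sigma$ is a Riemannian covering and $\widetilde{W}$ was defined to be generated by the lifts $\tilde w_{[c]}$ of the elements $w\in W(\Sigma)$, so that $\pi_{\Sigma}\circ\tilde w=w\circ\pi_{\Sigma}$.

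Next I will invoke Davis \cite[Lemma 1.1, Theorem 4.1]{Davis}: since $\widetilde{\Sigma}$ is simply connected and $\widetilde{\Gamma}$ is generated by reflections $\tilde r$ in hypersurfaces lying over the walls, $\widetilde{\Gamma}$ is a reflection group in the classical sense, a chamber $C$ is a fundamental domain, and $(\widetilde{\Gamma},V)$ is a Coxeter system for the set $V$ of reflections through the panels of $C$. Setting $N:=\widetilde{\Sigma}$ and $\Gamma:=\widetilde{\Gamma}$, this exhibits $M/\F=N/\Gamma$ as a good Coxeter orbifold with $N$ simply connected. The remaining claims — that the leaves of $\F$ are closed and embedded, that the regular leaves have trivial holonomy, and that the Coxeter orbifold $M/\F$ is simply connected — are precisely facts (1) and (2) recalled above (Lytchak \cite{Lytchak}; the author and T\"{o}ben \cite{AlexToeben2}), whose reproofs in \cite{Alex7} use exactly this reflection-group picture.

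I do not expect a real obstacle here: once the homomorphism $\pi_{1}(M)\to\widetilde{W}/\widetilde{\Gamma}$ and the cited results of Davis are granted, the only point requiring care is matching the orbifold structure on $\widetilde{\Sigma}/\widetilde{\Gamma}$ with the quotient $M/\F$ — that is, checking that the identification $M/\F=\widetilde{\Sigma}/\widetilde{\Gamma}$ is compatible with the Coxeter-orbifold charts coming from the chamber decomposition, so that ``good Coxeter orbifold'' is meant in the same sense on both sides.
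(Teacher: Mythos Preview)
Your proposal is correct and follows the paper exactly: the theorem is presented there as a summary (``Let us now sum up the above discussion''), and your argument assembles precisely the same ingredients in the same order --- the surjection $\pi_{1}(M)\to\widetilde{W}/\widetilde{\Gamma}$ from \cite{Alex7} forcing $\widetilde{W}=\widetilde{\Gamma}$, the chain $M/\F=\Sigma/W(\Sigma)=\widetilde{\Sigma}/\widetilde{W}=\widetilde{\Sigma}/\widetilde{\Gamma}$, Davis's results on reflection groups, and the cited facts (1) and (2) of Lytchak and Alexandrino--T\"{o}ben. Your closing caveat about matching orbifold structures is a fair point of care, but the paper does not address it separately either.
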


We conclude this section recalling the next result due the author and Gorodski \cite{AlexGorodski}.

\begin{theorem}
\label{teo-basic-forms}
Let $\mathcal F$ be a polar foliation on a complete Riemannian
manifold $M$, and let $\Sigma$ be a section 
of $\mathcal F$. Then the immersion of
$\Sigma$ into $M$ induces an isomorphism 
between the algebra of basic differential
forms on $M$ relative to $\mathcal F$
and the algebra of differential forms 
on $\Sigma$ which are invariant 
under the  Weyl pseudogroup of $\Sigma$.
\end{theorem}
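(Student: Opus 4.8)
The plan is to show that the pull-back $\iota^{*}$ under the inclusion $\iota\colon\Sigma\hookrightarrow M$ carries the algebra $\Omega^{*}_{\mathrm{bas}}(M,\F)$ of basic forms --- those $\omega$ with $i_{X}\omega=0$ and $\mathcal{L}_{X}\omega=0$ for every vector field $X$ tangent to $\F$ --- isomorphically onto the algebra $\Omega^{*}(\Sigma)^{W(\Sigma)}$ of forms invariant under the Weyl pseudogroup. Since $\iota^{*}$ commutes with $d$ and $\wedge$, it is enough to prove that $\iota^{*}$ lands in the invariant forms and is a bijection.

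First I would check that $\iota^{*}$ is well defined and injective. At a regular point $p\in\Sigma$ one has $T_{p}M=T_{p}L_{p}\oplus\nu_{p}L_{p}$ with $\nu_{p}L_{p}=T_{p}\Sigma$ because $\Sigma$ is a section, and a basic form is horizontal, so $\omega_{p}$ is determined by $(\iota^{*}\omega)_{p}$. To see that $\iota^{*}\omega$ is $W(\Sigma)$-invariant I would use equifocality: each generator $w=\varphi_{[\beta]}$ of the Weyl pseudogroup is covered by a foliation-preserving isotopy obtained by sliding along $\beta$ inside a leaf and normal-parallel-transporting, so $i_{X}\omega=\mathcal{L}_{X}\omega=0$ in leaf directions forces $w^{*}(\iota^{*}\omega)=\iota^{*}\omega$. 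Injectivity is then immediate: if $\iota^{*}\omega=0$ then $\omega$ vanishes at every regular point of $\Sigma$, hence --- since every regular leaf meets $\Sigma$ and $\omega$ is horizontal and leaf-invariant --- on the dense regular stratum, so $\omega=0$.

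The substantive part is surjectivity. Given $\alpha\in\Omega^{*}(\Sigma)^{W(\Sigma)}$, I would first build $\omega$ on the regular stratum $M_{\mathrm{reg}}$: for regular $q$ pick $x\in L_{q}\cap\Sigma$ together with an equifocal holonomy identification $\nu_{q}L_{q}\cong\nu_{x}L_{x}=T_{x}\Sigma$ along a curve in $L_{q}$, and let $\omega_{q}$ be the pull-back of $\alpha_{x}$ to $\nu_{q}L_{q}\cong T_{q}M/T_{q}L_{q}$, extended by zero in leaf directions. Any two choices of $x$, and any two holonomy identifications with endpoints in $\Sigma$, differ by an element of $W(\Sigma)$ --- this is precisely the role of the Weyl pseudogroup --- so the invariance of $\alpha$ makes $\omega_{q}$ unambiguous, and $\omega$ is then readily seen to be smooth and basic on $M_{\mathrm{reg}}$. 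The hard part is extending $\omega$ smoothly across the singular strata. Here I would localize: near any $x\in M$ the slice theorem recalled in Section~2 gives a foliation-preserving diffeomorphism of a neighborhood with $\R^{k}\times B$, where $B\subset\nu_{x}L_{x}$ carries an isoparametric foliation with a section $\Sigma_{x}$ whose trace is described by a \emph{finite reflection group} $\Gamma_{x}$; since basic forms on $\R^{k}\times B$ are pulled back from basic forms on $B$, the problem reduces to the Euclidean statement that a $\Gamma_{x}$-invariant form on $\Sigma_{x}$ is the restriction of a smooth basic form of that isoparametric foliation. This last statement I would settle with the invariant theory of finite reflection groups --- Chevalley's theorem on invariant polynomials together with Solomon's theorem that the $\Gamma_{x}$-invariant exterior forms on $\R^{m}$ form a free module over the invariant polynomials with basis the $dp_{i_{1}}\wedge\dots\wedge dp_{i_{k}}$ for a system of basic invariants $p_{i}$, and Schwarz's theorem to pass to the smooth category --- which yields both the required extension and the fact that it descends to a basic form. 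Finally, basic extensions are unique by the injectivity already established (applied over saturated neighborhoods), so the local extensions agree on overlaps and glue to a global $\omega\in\Omega^{*}_{\mathrm{bas}}(M,\F)$ with $\iota^{*}\omega=\alpha$.

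The step I expect to be the main obstacle is exactly this smooth extension over the singular leaves: on $M_{\mathrm{reg}}$ everything is soft differential topology, but at a singular leaf one genuinely needs the local isoparametric model of $\F$ and the Chevalley--Solomon--Schwarz structure of reflection-group-invariant differential forms; organizing the cover by saturated neighborhoods so that the uniquely determined local extensions patch is comparatively routine.
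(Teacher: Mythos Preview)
The paper does not prove this theorem: it is quoted from \cite{AlexGorodski} as background (``We conclude this section recalling the next result due the author and Gorodski''), so there is no in-paper proof to compare your proposal against.

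That said, your outline is essentially the argument of \cite{AlexGorodski}. There the restriction map $\iota^{*}$ is shown to be well defined and injective by exactly the horizontality/density argument you describe; surjectivity is obtained by first constructing $\omega$ on the regular stratum via holonomy transport (using equifocality and the definition of $W(\Sigma)$ to make the construction unambiguous), and then extending across the singular set by invoking the local isoparametric model together with the Chevalley restriction theorem for finite reflection groups. Your identification of the Chevalley--Solomon--Schwarz package as the crucial analytic input at singular points is correct; in \cite{AlexGorodski} this step is handled by appealing to the Palais--Terng treatment of basic forms for isoparametric foliations of Euclidean space, which rests on the same invariant-theoretic facts. So your sketch matches the original source in both architecture and in the location of the real work.
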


\section{Maps and reflection groups}

In this section we will prove the next result.

\begin{theorem}
\label{proposition-reflectiongroups}
 Let $N$ be a simply connected complete Riemannian manifold. Let $\Gamma$ be a reflection
 group on $N$, let $C\subset N$ be a  chamber and let $\q:N\to C$ denotes the projection.  Then there is a map $\imap:C\to C$ with the following properties:
 \begin{enumerate}
 \item $\imap$ is a homeomorphism;
 \item The restriction of $\imap$ to each stratum is a diffeomorphism;
 \item The map $\imap$ is smooth;
 \item The composition $\imap\circ \q :N \to N$ is smooth.
 \end{enumerate}
\end{theorem}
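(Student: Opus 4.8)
The plan is to reduce the problem to the classical theory of Coxeter groups, where the quotient map is known to be well-behaved, and then transport that structure back to $N$. The starting point is the fact that $(\Gamma,V)$ is a Coxeter system and $C$ is a fundamental domain. For the \emph{linear} model — a finite Coxeter group acting on a Euclidean space — Chevalley's theorem provides algebraically independent homogeneous invariant polynomials $f_1,\dots,f_n$ whose joint map $f=(f_1,\dots,f_n)$ descends to a diffeomorphism of the quotient onto its image, and, crucially, the composition $f\circ\pi$ is a smooth (polynomial) map on the whole space even though $\pi$ itself is only continuous; this is exactly the phenomenon (4) is asking for. Since the slice representations of $\F$ are polar (hence the infinitesimal foliation is isoparametric), near every point $x\in N$ the local picture of $\Gamma$ acting on $N$ is diffeomorphic to a finite reflection group acting linearly on $T_xN$. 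So first I would set up these local models and the associated local invariant maps.

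Next I would build $\imap$ globally. Because $N$ is simply connected and $\Gamma$ is a reflection group in the classical sense, $C$ is a good Coxeter orbifold: its chamber $C$ together with the mirror structure $\{C_v\}_{v\in V}$ encodes $\Gamma$ completely (Davis). The idea is to define $\imap:C\to C$ so that, in the local model near each stratum, $\imap$ is conjugate (via the local diffeomorphism to the linear model) to the map induced by Chevalley invariants — i.e., $\imap$ ``straightens'' the orbifold chart so that the composition with $\q$ becomes smooth. Concretely, one can use a partition of unity subordinate to the local charts to glue the local straightening maps, arguing that the property ``$\imap\circ\q$ is smooth'' is local on $N$ and is preserved under convex combinations in suitable coordinates, while ``$\imap$ is a stratum-wise diffeomorphism'' and ``$\imap$ is a homeomorphism of $C$'' are checked chart by chart and then globally via properness/injectivity. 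An alternative, cleaner route: average a $\Gamma$-invariant smooth structure. Since the regular part $C^{\circ}$ is already a manifold and the singular strata have positive codimension, one only needs to correct $\imap$ near the walls and lower strata; the wall case is the familiar ``fold map'' $t\mapsto t^2$ straightening, and deeper strata are handled inductively on the stratification by codimension.

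I would carry out the steps in this order: (i) recall the local linear reflection-group models and record that the Chevalley map gives a stratum-wise diffeomorphism of the linear chamber with smooth composition upstairs; (ii) cover $N$ by $\Gamma$-equivariant charts realizing these models, with $C$ covered by the corresponding chamber charts; (iii) define $\imap$ locally as the straightening conjugating the identity chart to the Chevalley chart, prove the four properties hold locally; (iv) globalize by an induction on the codimension of strata, using a partition of unity to patch, and verify the four properties survive; (v) check that $\imap\circ\q$, being smooth in every chart of $N$, is smooth, and that $\imap$ is a global homeomorphism because it is a proper continuous bijection of $C$.

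The main obstacle is step (iv): the compatibility of the local straightening maps on overlaps. Two charts near the same singular stratum may realize the linear model via different local diffeomorphisms, and the choice of Chevalley invariants is itself non-canonical, so the locally defined $\imap$'s need not agree on overlaps. Patching them with a partition of unity preserves continuity and the homeomorphism property easily, but preserving ``$\imap\circ\q$ is smooth'' requires knowing that any two admissible straightenings differ by a diffeomorphism of $C$ that is itself smooth \emph{after composing with $\q$} — essentially a uniqueness statement for smooth structures on the orbit space compatible with the polar foliation. This is where Theorem~\ref{teo-basic-forms} enters: basic functions on $M$ (equivalently, Weyl-invariant functions on $\Sigma$, equivalently functions on the orbifold $N/\Gamma=C$) form an algebra that is finitely generated and whose generators pull back to smooth functions on $N$; the map $\imap$ is then forced to be a morphism of this algebra, and the gluing ambiguity lives inside the automorphisms of a fixed finitely generated smooth algebra, which are controlled. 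I expect the bulk of the technical work to be in making this last point precise along the higher-codimension strata.
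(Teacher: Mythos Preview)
Your approach via local Chevalley models and partition-of-unity gluing is genuinely different from the paper's, and it has real gaps. The paper does not use invariant theory at all: it builds $G$ as an explicit global composition $G=F_{0}\circ F_{1}\circ\cdots\circ F_{n-1}$, where each $F_{i}$ is a $\Gamma$-equivariant ``radial flattening'' along the stratum $K_{i}$. After first replacing the metric by a $\Gamma$-invariant metric $\tilde g$ in which the normal slices to every stratum are flat and totally geodesic (so the local picture really is Euclidean), one fixes once and for all a smooth profile $h:[0,\infty)\to[0,\infty)$ with $h^{(n)}(0)=0$ for every $n$, $h(t)=t$ for $t\ge 1$, and $h'(t)\neq 0$ for $t>0$, and sets $F_{i}(\widetilde{\exp}_{x}(tv))=\widetilde{\exp}_{x}\bigl(l_{i}(x)\,h(t/l_{i}(x))\,v\bigr)$ near $K_{i}$, and the identity elsewhere. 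The infinite flatness of $h$ at $0$ annihilates all normal derivatives at the wall, which is exactly what makes $G_{i}\circ\q$ smooth across $K_{i}$; smoothness at the deeper strata then follows from an inductive estimate $\|D^{n}G_{i}\|\le A_{i}/|l_{i}|^{n}$ together with the flatness of $h$ again. No local models are patched: each $F_{i}$ is defined globally from the start.

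The specific problems with your route are these. First, for a finite reflection group the Chevalley map $f=(f_{1},\dots,f_{n})$ sends the chamber $C$ homeomorphically onto a semi-algebraic region $D\subset\mathbb{R}^{n}$ that is in general \emph{not} $C$ (already for the dihedral groups $D_{m}$, $m\ge 3$, the image is bounded by curved hypersurfaces, not by the linear walls of $C$); producing the required stratified smooth homeomorphism $D\to C$ is essentially the problem you set out to solve, so Chevalley has not bought you the local statement you assume in step~(iii). Second, convex combinations of homeomorphisms via a partition of unity are not homeomorphisms, so step~(iv) fails as written; and your proposed repair through Theorem~\ref{teo-basic-forms} is a category error, since that theorem concerns basic forms for a polar foliation on $M$ and is not available inside the purely reflection-group statement you are proving (nor would it furnish a convex or contractible space of admissible straightenings even if it were). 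The idea you are missing is precisely the replacement of the polynomial fold $t\mapsto t^{2}$ by an infinitely-flat-at-zero profile $h$, applied normally to each stratum in a single globally defined map; this is what lets the paper avoid gluing altogether.
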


The above result, Theorem \ref{theorem-resumo-propriedades} and Theorem \ref{teo-basic-forms}  will then imply  Theorem  \ref{theorem-submersionwithsingularities}.

\subsection{Motivation}
\label{subsection-motivation-G}
For the sake of motivation, in this subsection we prove the result in the particular case where $N=\R^2$ and $\Gamma$ is a finite group of 
reflections in lines $K_{1}=\cup \, \{y=a_{i}x\}$, following a procedure that will be generalized in the next section.

The strategy of the proof is the following.  Set $K_{0}=\{(0,0)\}$. First we will construct a $\Gamma$ invariant map $F_{1}:N \to N$ that will be smooth on $N-K_{0}$, whose restriction to each wall of each chamber will be the identity and so that $F_{1}\circ\q$ will  be smooth on $N-K_{0}$.
Then we will consider a smooth radial retraction $F_{0}$ (that is clearly  $\Gamma$-invariant) and the desired map will be $\imap=F_{0}\circ F_{1}$.  

In order to define the map $F_{1}$ we start by considering some real functions.

Let $h:[0,\infty)\to [0,\infty)$ be a smooth function such that
\begin{enumerate}
\item[(1)]  $h^{(n)}(0)=0$, for all $n$ (where $h^{(n)}$ is the $n$th derivative of $h$),
\item[(2)] $h(t)=t$ for $t\geq 1,$
\item[(3)]$h'(t)\neq 0$ for $t\neq 0$. 
\item[(4)] $h^{(n)}$ is a nonnegative increase function, for $t>0$ closer to $0$.
\end{enumerate}

Let $d_{1,0}:K_{1}\to\R$ be the distance function between  points of $K_{1}$ and $K_{0}$. Let $l_{1}:K_{1}\to\R$ be a smooth function such that
\begin{enumerate}
\item There are  some constant $A_{1,0}$ and $B_{1,0}$ such that 
\[ A_{1,0} d_{1,0}^{m}(p)\leq |l_{1}(p)|\leq B_{1,0} d_{1,0}^{n}(p).\]
 \item The image of $l_{1}$ is small enough so that $\{x+ v| v\in \nu_{x}K_{1}, \|v\|< l_{1}(x)\}$ is  a neighborhood $N_{1}$ of $K_{1}$. 
We also assume that $N_{1}$ is a union of disjoint neighborhoods $N_{1,k}$ such that $N_{1,k}$ is a neighborhood of a connected component $K_{1,k}$ of $K_{1}$ and $N_{1,k}\cap K_{1,j}=\emptyset$, for $k\neq j.$
\end{enumerate}

Now we  define $F_{1}$ on the neighborhood $N_{1}$ of $K_{1}$ as 
$$F_{1}(x+t v)=x+l_{1}(x)h(\frac{t}{l_{1}(x)})v,$$
where $x\in K_{1},$ $v$ is a unit normal vector to $K_{1},$ $0\leq t<l_{1}(x)$.
On $N-N_{1}$ the map $F_{1}$ is defined to be the identity. Since $h(t)=t$ for $t\geq 1$, this extension is continuous.
In this way we have constructed a $\Gamma$-equivariant continuous map $F_{1}:\R^{2}\to\R^{2}.$

In particular, if $C$ is a  chamber with a  wall $K_{1,0}:=\{y=a_{0}=0, x>0\},$ then the restriction of $F_{1}$ to a neighborhood $K_{1,0}$ is
$F_{1}(x,y)=(x,f_{1,2}(x,y)),$ where $f_{1,2}(x,y)=l_{1}(x)h(\frac{|y|}{l_{1}(x)})\frac{y}{|y|}$ for $y\neq 0$ and $f_{1,2}(x,0)=0.$
Here we can assume $l_{1}(x)=bx$, for  a small $b>0$.
Since $h^{(n)}(0)=0$ we have
\begin{equation}
\label{eq-derivadazero-exR2}
\|D^{m}f_{1,2}(x,0)\|=0.
\end{equation}

Equation \eqref{eq-derivadazero-exR2} implies that map $F_{1}$ is smooth on $\R^{2}-(0,0)$. 
Since $h'(t)\neq 0$ for $t\neq 0$, the map $F_{1}$ restricts to the regular stratum is a diffeomorphism.
Note that $F_{1}$ fails to be smooth at $(0,0)$. For example consider a sequence $\{x_{n}\}$ on $\{y=0,x>0\}$
that converges to $0$ and check that for two constant $k_1$ and $k_{2}$ such that $h'(k_1)\neq h'(k_2)$ we have $\frac{\partial f_{1,2}}{\partial y}(x_n,k_{1} l_{1}(x_{n}))\to h'(k_{1})$ and $\frac{\partial f_{1,2}}{\partial y}(x_n,k_{2} l_{1}(x_{n}))\to h'(k_{2}).$
   
Equation \eqref{eq-derivadazero-exR2} and the fact that the restriction of $\q$ to the regular stratum is an isometry imply
that  $F_{1}\circ\q$ is smooth on $\R^{2}-(0,0)$.

Also note that
\begin{equation}\label{eq-estimativa-li-exemplo}\|D^{n} F_{1}\|\leq \frac{A}{|l_{1}|^{n}}
\end{equation}
in the intersection of each compact set with $N_{1}$.

We define a map $F_{0}:\R^{2}\to\R^{2}$ as  $F_{0}(0,0)=(0,0)$ and 
$$F_{0}(x,y)=(h(\sqrt{x^{2}+y^{2}})\frac{x}{\sqrt{x^{2}+y^{2}}},h(\sqrt{x^{2}+y^{2}})\frac{y}{\sqrt{x^{2}+y^{2}}}) ,$$
for $(x,y)\neq (0,0)$. 
Clearly $F_{0}$ is a $\Gamma$-equivariant continuous map and a smooth map on $\R^{2}-(0,0)$. Since  $h'(t)\neq 0$ for $t\neq 0$ the map $F_{0}$ is a diffeomorphism on  $\R^{2}-(0,0).$ The properties of the function $h$  imply that $F_{0}$ is also smooth at $(0,0)$. 

Finally we consider $\imap=F_{0}\circ F_{1}$. From the above discussion we conclude that the maps $G$ and $G\circ\q$ are smooth on $R^{2}-(0,0)$ and that  $G$ is a $\Gamma$-equivariant continuous map whose restriction to each stratum is a diffeomorphism. 

Note that, in the  case where $C$ is a  chamber with a  wall $K_{1,0}:=\{y=a_{0}=0, x>0\}$ and $l_{1}(x)=bx$ we have  
\begin{equation}
\label{eq-lhospital}
 h^{(n)}(\sqrt{x^{2}+y^{2}})\leq h^{(n)}(\sqrt{x^{2}+b^{2}x^{2}}),
\end{equation}
for $(x,y)\in N_{1}$. This follows from the fact that $h^{(n)}$ is a nonnegative increase function, for $t>0$.

The properties of the function $h$ (e.g., $h^{(n)}(0)=0$ and  equation \eqref{eq-lhospital}), equation \eqref{eq-estimativa-li-exemplo}, and the fact that the restriction of $\q$ to the regular stratum is an isometry 
imply that the maps $G$ and $G\circ\q$  are also smooth at $(0,0).$ We invite the reader to check these straightforward calculations.


\subsection{Proof of Theorem \ref{proposition-reflectiongroups}}

The proof of the general case is based on some ideas presented in Subsection \ref{subsection-motivation-G}. 
First, we are going to define a new metric $\tilde{g}$  so that normal slices of the strata are flat and totally geodesics. 
Then we will define the desired map $G$ as compositions of $\Gamma$-invariant maps $F_i$, that are constructed using 
exponential map with respect to the metric $\tilde{g}$.
Finally, due the properties of the  metric $\tilde{g}$, we will be able to verify that these maps are smooth in a similar way as we have done in 
Subsection \ref{subsection-motivation-G}.  

Consider the stratification of $N$ by orbit types of the action of $\Gamma$ on $N$, denote $K_{0}$  the strata with the lower dimension and 
$K_{i}$ the union of strata with dimension $i+\dim K_{0}$. Then we have the stratification $\{K_i\}_{i=0,...,n}$ where $n\leq \dim N$.

\begin{lemma}
\label{lemma-metric-normal-flat}
There exist a $\Gamma$-invariant metric $\tilde{g}$ on $N$ and neighborhoods $N_i$ of each stratum $K_{i}$ with the following properties:
\begin{enumerate}
\item[(1)] For each $x\in K_{i}$, we find a slice $S_{x}^{i}:=\{\exp_{x}(\xi)| \xi\in\nu_{x}(K_{i}), \|\xi\|<\epsilon_{x}\}$ that is a flat totally geodesic submanifold contained in $N_{i}$.
\item[(2)] If $j>i$, $x\in K_{i}$ and $y\in K_{j}\cap S_{x}^{i},$ then $S_{y}^{j}\subset S_{x}^{i}$. 
\end{enumerate}
\end{lemma}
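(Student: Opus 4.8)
The plan is to build the metric $\tilde g$ and the neighborhoods $N_i$ by a downward induction on the stratification dimension, starting from the top stratum (the regular set) and gluing in corrections near each successively more singular stratum $K_i$. The essential observation is that the slice representation of $\Gamma$ at a point of $K_i$ is again a finite reflection group acting linearly on the normal space $\nu_x(K_i)$, so near $K_i$ the foliation by $\Gamma$-orbits looks, up to diffeomorphism, like the product of $K_i$ with a linear reflection arrangement; on that local model the flat Euclidean metric on the slice is already $\Gamma$-invariant and makes the slices totally geodesic. The work is to patch these local flat-slice metrics together into one global $\Gamma$-invariant metric without destroying the totally-geodesic property on the more singular slices already treated.

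Concretely, I would proceed as follows. First, fix a $\Gamma$-invariant background metric $g_0$ on $N$ (average any metric over $\Gamma$, or just keep the given one, which is already $\Gamma$-invariant). For each point $x$ in a stratum $K_i$ choose a slice $S_x^i$ via $\exp^{g_0}$; using the local normal-slice theorem for the reflection group (Davis \cite{Davis}, together with the fact, recalled in Section 2, that slice representations are polar, hence here finite reflection groups), pick a $\Gamma_x$-invariant chart identifying a neighborhood of $x$ with $U \times V$ where $U \subset K_i$ and $V \subset \nu_x(K_i)$ carries a linear reflection action. On such a chart declare the metric to be (a fixed metric on $U$) $\oplus$ (the flat metric on $V$); this is $\Gamma_x$-invariant, and $\Gamma$-invariance is obtained by transporting it around the orbit. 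Then take a $\Gamma$-invariant partition of unity subordinate to a cover refining all these charts and to the regular set, and let $\tilde g$ be the corresponding convex combination. Shrinking the slice radii $\epsilon_x$ and the neighborhoods $N_i$ appropriately (so that $N_i$ retracts onto $K_i$ along these slices and the nesting $S_y^j \subset S_x^i$ holds for $y \in K_j \cap S_x^i$, $j>i$, which is exactly the statement that slices of a finer stratum lying inside a coarser slice are sub-slices — a property that already holds in each linear model) gives property (2). Property (1) — flatness and total geodesy of $S_x^i$ — must be arranged carefully: the partition-of-unity average is flat along a slice only if \emph{every} chart contributing near that slice restricts to the flat product metric there. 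To guarantee this I would do the gluing by downward induction on $i$: having fixed $\tilde g$ already to be a flat product near $K_0,\dots,K_{i-1}$ (the more singular strata), choose the charts around $K_i$ and the cutoff functions so that on $N_i$ no cutoff from a stratum $K_j$ with $j<i$ is active except in regions where it too equals the flat product compatible with $S^i_x$; consistency of the flat structures on overlapping slices is precisely the nesting condition, so it is automatic from the linear models.

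The main obstacle I anticipate is exactly this compatibility of the local flat slice-metrics on overlaps: two different points $x \in K_i$ and $x' \in K_{i'}$ may have slices that intersect, and one needs the Euclidean structures they induce on the common region to agree so that a convex combination remains flat and geodesic there. This is where the structure of reflection groups is used in an essential way — in the linear arrangement $V$, a subspace fixed by a subgroup is itself a linear flat through the origin and the induced metric is just the restriction of the ambient flat metric, so the flat structures are forced to coincide. Making this rigorous amounts to showing the "slices of slices are slices" nesting condition (2) not just set-theoretically but metrically, and organizing the induction and the partition of unity so that flatness is never spoiled. Once $\tilde g$ with properties (1)–(2) is in hand, the rest of the proof of Theorem \ref{proposition-reflectiongroups} proceeds as in the $\R^2$ motivation: define the maps $F_i$ stratum by stratum using $\exp^{\tilde g}$ along the flat totally geodesic slices, with the profile function $h$, and check smoothness of $F_i$ and $F_i \circ \q$ using that on a flat totally geodesic slice the computation reduces verbatim to the Euclidean one of Subsection \ref{subsection-motivation-G}.
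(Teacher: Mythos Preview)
Your approach is essentially the paper's: construct the metric inductively over the strata using local product metrics (flat on the slice factor) glued by a $\Gamma$-invariant partition of unity, with the nesting condition (2) coming from the fact that strata are totally geodesic in any $\Gamma$-invariant metric. Two small points where the paper is sharper: the induction runs \emph{upward} from $K_0$ (at step $i+1$ the new local metric near $K_{i+1}$ is required to coincide with the already-built $\tilde g_i$ near $K_0\cup\cdots\cup K_i$, which is the natural direction since higher strata accumulate onto lower ones), whereas your description oscillates between this and a downward scheme; and the paper first checks that the normal bundle of each $K_i$ is trivial (via the simplicial-cone structure of the singular set in a slice and the fact that distinct chamber interiors cannot be connected by a regular path), which is what underwrites the global product charts $U\times V$ you invoke.
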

\begin{proof}

First note that the normal bundle of $K_{i}$ is trivial. 
This can be proved using the following facts:
\begin{itemize}
\item each stratum is totally geodesic;
\item the slice $S_{x}^{i}$ has dimension 1 or the intersection of $S_{x}^{i}$ with the singular stratification is diffeomorphic to a union of simplicial cones;
\item the interior of two different chambers can not be joined by a continuous curve without singular points.
\end{itemize}


The desired metric $\tilde{g}$ will be constructed by induction.

Using the fact that the normal bundle of $K_{0}$ is trivial, we can find a $\Gamma$-invariant metric metric $g_{0}$ on a neighborhood of $K_{0}$ such that $S_{x}^{0}$ is a flat totally geodesic submanifold. Considering the appropriate $\Gamma$-invariant partition of unity $h_{1}, h_{2}$, we set
$\tilde{g}_{0}=h_{1}g_{0}+h_{2}g.$ Clearly $\tilde{g}_{0}$ is a $\Gamma$-invariant metric and hence each stratum is totally geodesic with respect to $\tilde{g}_{0}$. This last fact and the fact that $S_{x}^{0}$ is totally geodesic (with respect to $\tilde{g}_{0}$) imply that $S_{y}^{1}\subset S_{x}^{0}$ for $y\in K_{1}\cap S_{x}^{0}$.

Now assume  that we have defined a $\Gamma$-invariant metric  $\tilde{g}_{i}$ with the following properties:
\begin{enumerate}
\item[(1)] For $j\leq i,$ $S_{x}^{j}$ is a flat totally geodesic submanifold,
\item[(2)] $S_{y}^{k}\subset S_{x}^{j}$ for  $y\in K_{k}\cap S_{x}^{j}$ and $j<k\leq 1+i$.
\end{enumerate}

Using the fact that the normal bundle of $K_{i+1}$ is trivial and the properties of the metric $\tilde{g}_{i}$, we can find a $\Gamma$-invariant metric  $g_{i+1}$ on a neighborhood $N_{i+1}$ of $K_{i+1}$ such that
\begin{enumerate}
\item[(3)] The slice $S_{x}^{i+1}$ with respect to $\tilde{g}_{i}$ is also a slice with respect to $g_{i+1}$.
\item[(4)] $S_{x}^{i+1}$ is a flat totally geodesic submanifold.
\item[(5)] $g_{i+1}$ coincides with $\tilde{g}_{i},$ in a neighborhood of $K_{i}\cup\ldots\cup K_{0}$.
\end{enumerate}

 Considering the appropriate $\Gamma$-invariant partition of unity $h_{1}, h_{2}$, we define a $\Gamma$-invariant metric 
$\tilde{g}_{i+1}=h_{1}g_{i+1}+h_{2}\tilde{g}_{i}.$ The fact that the strata are totally geodesic with respect to $\tilde{g}_{i+1}$ and properties $(1),\ldots, (5)$ above imply:

\begin{enumerate}
\item[(6)] For $j\leq i+1,$ $S_{x}^{j}$ is a flat totally geodesic submanifold,
\item[(7)] $S_{y}^{k}\subset S_{x}^{j}$ for   $y\in K_{k}\cap S_{x}^{j}$   and $j<k\leq 2+i$.
\end{enumerate}

Finally set $\tilde{g}:=\tilde{g}_{n-1}$.

\end{proof}

For  $i=0,..., n-1,$ we  are going to define maps $F_{i}:N\to N$   with the following properties:
\begin{enumerate}
\item $F_i$ is a  $\Gamma$-equivariant homeomorphism, whose restriction to each stratum is a diffeomorphism, 
\item the restriction of the map $F_i$ to $K_{n}\cup\cdots\cup K_{i}$ is smooth,
\item for $G_i = F_i \circ F_{i+1} \circ ...\circ F_{n-1},$ the maps $G_i$ and  $G_i \circ \q$ are smooth on $K_{n}\cup\cdots\cup K_{i}$. 
\end{enumerate}

Firstly, we need to  to consider some functions. 

Let $h:[0,\infty)\to [0,\infty)$ be a smooth function such that
\begin{enumerate}
\item  $h^{(n)}(0)=0$, for all $n$ (where $h^{(n)}$ is the $n$th derivative of $h$),
\item $h(t)=t$ for $t\geq 1,$
\item$h'(t)\neq 0$ for $t\neq 0$. 
\item $h^{(n)}$ is a nonnegative increase function, for $t>0$ closer to $0$.
\end{enumerate}

Let $d_{i,j}:K_{i}\cap N_{j}\to \mathbb{R}^{+}$ be the distance function (with respect to the metric $\tilde{g}$) between points of  $ K_{i}\cap N_{j}$ and the set $K_{j},$ where $j<i$ and $i>0$.

Finally, for $i\geq 0,$ let  $l_{i}:K_{i}\to\mathbb{R}$ be a smooth function such that
\begin{enumerate}
\item For $i>0$,   $p\in K_{i}\cap N_{j}$ (when $j<i$) and $p\notin N_{k}$ (when $j<k<i$) we have  
\[ A_{i,j} d_{i,j}^{m}(p)\leq |l_{i}(p)|\leq B_{i,j} d_{i,j}^{n}(p),\]
where $A_{i,j}$ $B_{i,j}$ are constants.
 \item the image of $l_{i}$ is small enough so that $\{v| v\in \nu_{x}K_{i}, \|v\|< l_{i}(x)\}$ is mapped diffeomorphically onto $N_{i}$
by  $\widetilde{\exp}^{\perp}$, i.e.,  the normal exponential map  with respect to the metric $\tilde{g}$. 
\end{enumerate}

Define a map $F_{i}$ on a neighborhood $N_{i}$ of $K_{i}$ as 
$$F_{i}(\exp(tv))=\widetilde{\exp}_{x}(l_{i}(x)h(\frac{t}{l_{i}(x)})v),$$ where $x\in K_{i},$ $v$ is a unit normal vector  to $K_{i}$ with footpoint $x$, $0\leq t<l_{i}(x)$ and 
$\widetilde{\exp}$ is  the  exponential map  with respect to the metric $\tilde{g}$. 
On $N-N_{i}$ the map $F_{i}$ is defined to be the identity.
 
 Clearly $F_i$ and $G_i = F_i \circ F_{i+1} \circ ...\circ F_{n-1}$  are  $\Gamma$-equivariant homeomorphisms and, by construction, $F_i$ restricted to each stratum is a diffeomorphism.
As we will see below, the maps $F_i$, $G_i$ and $G_{i}\circ\q$ also fulfill the other desired properties. Then we set $\imap:=G_{0}$.

\begin{lemma}
\label{lemma-conta-Rn}
\begin{enumerate}
\item The restriction of the map $F_i$ to $K_{n}\cup\cdots\cup K_{i}$ is smooth.
\item  The maps $G_i$ and  $G_i \circ \q$ are smooth on $K_{n}\cup\cdots\cup K_{i}$. 
\end{enumerate}
\end{lemma}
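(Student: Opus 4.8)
The plan is to mimic the model computation of Subsection \ref{subsection-motivation-G}, but carried out stratum by stratum and in the metric $\tilde g$ supplied by Lemma \ref{lemma-metric-normal-flat}. Fix $i$ and let $x\in K_j$ with $j>i$. Near such a point the stratum $K_j$, together with all the strata $K_k$ ($i\le k\le j$) that it meets, is contained in a single flat totally geodesic slice $S^{i}_{x_0}$ through some point $x_0\in K_i$; by property (2) of Lemma \ref{lemma-metric-normal-flat} this slice is nested compatibly with the slices of the lower strata. Since the slice is flat, in $\widetilde{\exp}$-normal coordinates on it the map $F_i$ is \emph{literally} the $\R^2$ (or $\R^k$) formula of the motivation section: it fixes $K_i$ and sends $\widetilde{\exp}_x(tv)\mapsto \widetilde{\exp}_x(l_i(x)h(t/l_i(x))v)$ in the normal directions to $K_i$. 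So the smoothness of $F_i$ on $K_n\cup\cdots\cup K_i$ reduces to exactly the estimate \eqref{eq-derivadazero-exR2}: because $h^{(n)}(0)=0$ for all $n$, every derivative of the normal component of $F_i$ vanishes along $K_i$, and along each deeper stratum $K_j$ ($j>i$) the map $F_i$ is smooth away from $K_i$ for the elementary reason that there $l_i$ is bounded away from $0$ and $h$ is smooth. The key point is that the flatness of the slices is what makes $\widetilde{\exp}^{\perp}$ an honest linear identification, so that no curvature terms enter and the one-variable computation with $h$ transcribes verbatim.

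Next I would prove the second assertion by the obvious downward induction on $i$, using the factorization $G_i=F_i\circ G_{i+1}$. The inductive hypothesis gives that $G_{i+1}$ and $G_{i+1}\circ\q$ are smooth on $K_n\cup\cdots\cup K_{i+1}$; one must upgrade this to smoothness on $K_n\cup\cdots\cup K_i$, i.e.\ add the stratum $K_i$. Off $K_i$ there is nothing new. Along $K_i$ one argues as in the motivation: $F_i$ collapses a neighborhood of $K_i$ onto a set where $G_{i+1}$ is already known to be smooth (because $F_i$ maps into $K_i\cup(K_n\cup\cdots\cup K_{i+1})$-flavored region near $K_i$), while the flattening effect of $h$ kills all derivatives transverse to $K_i$. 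To make the composition $G_i\circ\q$ smooth we use, exactly as in Subsection \ref{subsection-motivation-G}, that the restriction of $\q:N\to C$ to the regular stratum is a local isometry, together with the derivative bound on the $F_k$'s: the analogue of \eqref{eq-estimativa-li-exemplo}, namely $\|D^n F_i\|\le A_{n}/|l_i|^n$ on compact sets, controls the chain-rule blow-up, and the polynomial two-sided comparison $A_{i,j}d_{i,j}^{m}\le |l_i|\le B_{i,j}d_{i,j}^{n}$ in the definition of $l_i$, combined with $h^{(n)}(0)=0$ (all derivatives of $h$ vanishing faster than any polynomial near $0$), forces the product $h^{(n)}(\cdot)/|l_i|^n$ and all mixed terms to extend smoothly, with value $0$, across $K_i$. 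The monotonicity property of $h^{(n)}$ near $0$ is what lets one dominate $h^{(n)}$ evaluated at the genuine $\tilde g$-distance by $h^{(n)}$ evaluated at the model (``$\sqrt{x^2+b^2x^2}$'') distance, as in \eqref{eq-lhospital}, so the estimate is uniform.

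The main obstacle, and the place where the argument requires care rather than transcription, is the interaction between \emph{different} strata of intermediate dimension: when $x\in K_j$ with $i<j<n$, one is simultaneously near $K_i$ and near $K_j$, and the functions $l_i$ and $l_j$ are governed by different distance functions $d_{i,j'}$ and $d_{j,j''}$. One must check that the nesting $S^{j}_{y}\subset S^{i}_{x}$ from Lemma \ref{lemma-metric-normal-flat}(2) makes the two normal-exponential flattenings commute well enough that the composite $G_i$ is expressed, in a single flat coordinate chart, as an iterated $h$-rescaling in orthogonal groups of variables — so that the vanishing-to-infinite-order statements of the component maps add up rather than interfere. Concretely, I expect to organize the estimate around a multi-index Leibniz expansion of $D^\alpha(F_i\circ G_{i+1})$ and show each term carries a factor of the form $h^{(p)}(t/l_i)$ or $l_i^{-q}$ whose product with the rest stays bounded (indeed $\to 0$) as one approaches $K_i$, using $|l_i|\le B_{i,j}d_{i,j}^{n}$ to absorb the negative powers of $l_i$ against the super-polynomial decay of $h^{(p)}$ near $0$. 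Granting Lemma \ref{lemma-conta-Rn}, the theorem follows at once: $\imap:=G_0$ is a $\Gamma$-equivariant homeomorphism whose restriction to each stratum is a diffeomorphism, it is smooth (case $i=0$ of part (1)), and $\imap\circ\q=G_0\circ\q$ is smooth (case $i=0$ of part (2)); passing to the chamber $C\cong N/\Gamma$ and combining with Theorems \ref{theorem-resumo-propriedades} and \ref{teo-basic-forms} then yields Theorem \ref{theorem-submersionwithsingularities} and Corollary \ref{corollary-polaraction-isoparametricmap}.
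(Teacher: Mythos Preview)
Your proposal is correct and follows essentially the same route as the paper: reduce to the Euclidean case via Lemma~\ref{lemma-metric-normal-flat}, prove smoothness by downward induction using the vanishing $h^{(n)}(0)=0$ together with the key chain-rule estimate $\|D^{n}(F_i\circ\cdots\circ F_{n-1})\|\le A_i/|l_i|^{n}$, and invoke that $\q$ is an isometry on the regular stratum to pass to $G_i\circ\q$. Your discussion of the interaction between intermediate strata and the Leibniz expansion makes explicit what the paper leaves as ``straightforward calculations,'' but the underlying argument is the same.
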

\begin{proof}
Due to Lemma \ref{lemma-metric-normal-flat}, we can reduce the proof to the Euclidean case.
Let $z$ be a  coordinate such that $(F_{n-1})_{z}$ is not the identity on a neighborhood of a point $p\in K_{n-1}.$ Then the properties of the function $h$ imply $\| D^{n}(F_{n-1})_{z}(p)\|=0.$  It follows from this equation and from the fact that $\pi|_{K_{n}}$ is an isometry  that $F_{n-1}$ and $F_{n-1}\circ\pi$ are smooth on $N_{n-1}$. By the chain rule we have  the next equation $$\| D^{n}F_{n-1}\|\leq \frac{A_{n-1}}{|l_{n-1}|^{n}},$$
 in the intersection of each compact subset with $N_{n-1}.$ 

Now assume  that the maps $F_{i}\circ F_{i+1}\circ\cdots\circ F_{n-1}$ and $F_{i}\circ F_{i+1}\circ\cdots\circ F_{n-1}\circ\pi$ are smooth on $N_{i}$. Also assume that  
 \begin{equation}
 \label{eq-estimativa-li}
 \|D^{n} (F_{i}\circ F_{i+1}\circ\cdots\circ F_{n-1})\|\leq \frac{A_{i}}{|l_{i}|^{n}}.
 \end{equation}
Let $z$ be a coordinate such that $(F_{i-1}\circ\cdots\circ F_{n-1})_{z}$ is not the identity on a neighborhood of a point $p\in K_{i-1}.$

Then the properties of $h$ and equation \eqref{eq-estimativa-li} imply
 $$\| D^{n}(F_{i-1}\circ\cdots\F_{n-1})_{z}(p)\|=0.$$ 
From this equation and from the fact that $\pi_{K_{n}}$ is an isometry we conclude that
$F_{i-1}\circ\cdots\circ F_{n-1}$ and $F_{i-1}\circ\cdots\circ F_{n-1}\circ\pi$ are smooth on $N_{i-1}$. 

We also have by the chain rule 
$$\|D^{n} (F_{i-1}\circ\cdots\circ F_{n-1})\|\leq \frac{A_{i-1}}{|l_{i-1}|^{n}}$$ in the intersection of  each compact set with $N_{i-1}$. 
The rest of the proof follows by induction.
\end{proof}


\bibliographystyle{amsplain}

\begin{thebibliography}{10}

\bibitem{Alex1} M. M. Alexandrino, \textit{Integrable Riemannian 
submersion with singularities},  Geom.\ Dedicata, \textbf{108} (2004), 
141-152.


\bibitem{Alex2}M. M. Alexandrino, \textit{Singular Riemannian
foliations with sections},  Illinois J.\ Math.\ \textbf{48} (2004)
no. 4, 1163-1182.


\bibitem{Alex4} M. M. Alexandrino, \textit{Proofs of conjectures about singular Riemannian foliations,}  Geom.\ Dedicata \textbf{119} (2006) no.~1, 219-234.

\bibitem{AlexToeben} M. M. Alexandrino and D. T\"{o}ben, \textit{Singular Riemannian foliations on simply connected spaces},  Differential Geom.\
Appl.\ \textbf{24} (2006), no. 4, 383-397.

\bibitem{AlexGorodski} M. M. Alexandrino and C. Gorodski, \emph{Singular riemannian foliations with sections, transnormal maps and basic forms}, Annals of Global Analysis and Geometry \ \textbf{32} (3) (2007), 209--223.

\bibitem{AlexToeben2} M. M. Alexandrino and D. T\"{o}ben, \textit{Equifocality of singular Riemannian foliations}, 
Proc. Amer. Math. Soc. \textbf{136} (2008), no. 9, 3271-3280.


\bibitem{Alex7} M. M. Alexandrino, \emph{On polar foliations and fundamental group}, Preprint, http://arxiv.org/abs/1012.0926.


\bibitem{CarterWest2} S. Carter and A. West, \emph{Generalised Cartan polynomials,} J.\ London Math.\ Soc.\ 32 (1985), 305-316.

\bibitem{Davis} M. W. Davis,\emph{Groups generated by reflections and aspherical manifolds not covered by Euclidean space}, Annals of Mathematics, \textbf{117}, (1983), 293-324.

\bibitem{FerusKarcherMunzner} D. Ferus, H. Karcher \and H. F. M\"{u}nzner, \emph{Cliffordalgebren und neue isoparametrische Hyperfl\"{a}chen} Math. Z. \textbf{177} (1981), 479--502.

\bibitem{HOL} E. Heintze, X. Liu and C. Olmos, \emph{Isoparametric 
submanifolds and a Chevalley-Type restriction theorem,} Integrable systems, geometry, and topology, 151–190, 
Stud. Adv. Math., \textbf{36}, Amer. Math. Soc., Providence, RI, 2006. 


\bibitem{Lytchak} A. Lytchak, \emph{Geometric resolution of singular Riemannian foliations}, Geom.\ Dedicata \textbf{149} (2010) no.~1, 379-395.


\bibitem{Molino} P. Molino, \textit{Riemannian foliations}, Progress in Mathematics vol. 73, Birkh\"{a}user Boston 1988.

\bibitem{PTlivro} R. S. Palais and C.-L. Terng, \textit{Critical point 
theory and submanifold geometry}, Lecture Notes in Mathematics 1353, 
Springer-Verlag 1988.


\bibitem{terng} C-L.Terng, \emph{Isoparametric submanifolds and their Coxeter groups}, J. Differential Geom. 21 (1985) 79-107.


\bibitem{ThSurvey1} G. Thorbergsson, \textit{A survey on isoparametric hypersurfaces and their generalizations,} Handbook of Differential Geometry, Vol \textbf{1}, Elsevier Science B.V. (2000).
\bibitem{ThSurvey2} G. Thorbergsson, \emph{ Transformation groups and submanifold geometry}, Rendiconti di Matematica, Serie VII, \textbf{25}, (2005) 1--16.

\bibitem{ThSurvey3} G. Thorbergsson,\emph{Singular Riemannian foliations and isoparametric submanifolds,} Milan J. Math. \textbf{78}, (2010), 355–370.

\bibitem{Toeben} D. T\"oben, \textit{Parallel focal structure and 
singular Riemannian foliations},  Trans.\ Amer.\ Math.\ Soc.\ \textbf{358} (2006), 1677-1704.

\bibitem{Wang1} Q-M. Wang, \emph{ Isoparametric functions on Riemannian manifolds. I.} Math. Ann. \textbf{277} (1987), 639-646

\end{thebibliography}

\end{document}